\documentclass[a4paper,11pt]{amsart}

\usepackage{nccmath} 
\usepackage{amssymb,amsthm,mathtools}

\usepackage{mathrsfs}

\usepackage{mathabx}
\if0
\usepackage[no-math]{fontspec}
\usepackage{xltxtra} 
\fi

\usepackage{filecontents}
\if0
\usepackage[backref=false,                %
            useprefix=true,
            hyperref=true,               %
            firstinits=true,             %
            indexing=false,               %
            url=true,                   %
            style=alphabetic,            
            minbibnames=6,
            maxbibnames=99,
            mincitenames=6,
            maxcitenames=6,
            backend=biber,               %
            doi=false,
            texencoding=utf8,
            bibencoding=utf8]{biblatex}

\usepackage{csquotes}
\addbibresource{\mybib}
\usepackage{biblatex}
\renewbibmacro{in:}{}

\addbibresource{\mybib}
\AtEveryBibitem{\clearfield{number}}
\AtEveryBibitem{\clearfield{month}}
\DeclareFieldFormat[article]{title}{\textit{#1}}
\DeclareFieldFormat[article]{journaltitle}{\textrm{#1}}
\DeclareFieldFormat[article]{volume}{\textbf{#1}}
\fi

%
\usepackage{hyperref}

\usepackage{enumitem}
\if0
\usepackage{tikz}
\usepackage{tkz-graph}
\usepgflibrary{arrows.meta}
\fi
\usepackage[all]{xy}
\xyoption{line}
\usepackage{color}
\usepackage[normalem]{ulem}


\renewcommand{\le}{\varleq}
\renewcommand{\ge}{\vargeq}



\newcommand\mL{L\kern-0.08cm\char39}

\newcommand{\myforall}{\text{ for all }}

\newcommand{\myand}{\text{ and }}
\newcommand{\myif}{\text{ if }}

\newcommand{\mythen}{\text{ then }}

\newcommand{\seb}{\{\,}
\newcommand{\sen}{\,\}}

%
%

%








%
%

%



\newcommand{\pdirectional}{\raise0.05em\hbox{$+$}directional}
\newcommand{\pdirectionality}{\raise0.05em\hbox{$+$}directionality}
\newcommand{\pdirectionalitys}{\raise0.05em\hbox{$+$}directionality }
\newcommand{\pdirectionals}{\raise0.05em\hbox{$+$}directional }
\newcommand{\mdirectional}{\raise0.05em\hbox{$-$}directional}
\newcommand{\mdirectionality}{\raise0.05em\hbox{$-$}directionality}
\newcommand{\mdirectionalitys}{\raise0.05em\hbox{$-$}directionality }
\newcommand{\mdirectionals}{\raise0.05em\hbox{$-$}directional }



\newcommand{\Z}{\mathbb{Z}}
\newcommand{\Nonne}{\mathbb{N}}

\newcommand{\bi}{\in \Z}


\newcommand{\bpi}{\ge 1} 


\newcommand{\bni}{\ge 0} 

\newcommand{\diam}{{\rm diam}}

\newcommand{\dist}{{\rm dist}}



%


\newcommand{\ep}{\varepsilon}




\newcommand{\kuu}{\emptyset}
\newcommand{\nekuu}{\neq \kuu}
\newcommand{\iskuu}{= \kuu}

\newcommand{\bN}{\boldsymbol{N}}

\newcommand{\barV}{\bar{V}}

%

%
%
%

%
%


\newcommand{\hy}{\hat{y}}

%


%

%
%

%
%

\newcommand{\centb}{\begin{center}}
\newcommand{\centn}{\end{center}}

\newcommand{\enumb}{\begin{enumerate}}
\newcommand{\enumn}{\end{enumerate}}

\newcommand{\itemb}{\begin{itemize}}
\newcommand{\itemn}{\end{itemize}}

\numberwithin{equation}{section}

\usepackage{cleveref}
\setlist[enumerate,1]{label=(\alph*),ref=(\alph*)}
\setlist[enumerate,2]{label=(\arabic*),ref=(\alph{enumi}-\arabic{enumii})}
\setlist[enumerate,3]{label=(\Alph*),ref=(\roman{enumi}-\alph{enumii}-\Alph*)}
\setlist[enumerate,4]{label=(\arabic*),ref=(\roman{enumi}-\alph{enumii}-\Alph{enumiii}-\arabic*)}

\newlist{alphaenum}{enumerate}{1}
\setlist[alphaenum,1]{label=($\alpha$-\arabic*),ref=($\alpha$-\arabic*)}
\newlist{betaenum}{enumerate}{1}
\setlist[betaenum,1]{label=($\beta$-\arabic*),ref=($\beta$-\arabic*)}
\newlist{gammaenum}{enumerate}{1}
\setlist[gammaenum,1]{label=($\gamma$-\arabic*),ref=($\gamma$-\arabic*)}

\newtheorem{thm}{Theorem}[section]
\newtheorem{lem}[thm]{Lemma}
\newtheorem{prop}[thm]{Proposition}
\newtheorem{cor}[thm]{Corollary}

\theoremstyle{definition}
\newtheorem{defn}[thm]{Definition}

\theoremstyle{remark}

\crefname{sec}{\S}{\S\S}
\crefname{mainthm}{Theorem}{Theorems}
\crefname{thm}{Theorem}{Theorems}
\crefname{lem}{Lemma}{Lemmas}
\crefname{prop}{Proposition}{Propositions}
\crefname{cor}{Corollary}{Corollaries}
\crefname{defn}{Definition}{Definitions}
\crefname{conj}{Conjecture}{Conjectures}
\crefname{example}{Example}{Examples}
\crefname{nota}{Notation}{Notations}
\crefname{rem}{Remark}{Remarks}
\crefname{note}{Note}{Notes}
\crefname{case}{Case}{Cases}
\crefname{figure}{Figure}{Figures}
\crefname{section}{\S}{\S\S}
\crefname{enumi}{}{}
\crefname{enumii}{}{}
\crefname{equation}{}{}


%

%

%


\usepackage{url}

\begin{document}

\title[Finite-rank Bratteli--Vershik homeomorphisms are expansive]
{Finite-rank Bratteli--Vershik homeomorphisms are expansive}

\author{TAKASHI SHIMOMURA}

\address{Nagoya University of Economics,
 Uchikubo 61-1, Inuyama 484-8504, Japan}
\curraddr{}
\email{tkshimo@nagoya-ku.ac.jp}
\thanks{}

\subjclass[2010]{Primary 37B05, 37B10.}

\keywords{rank, Bratteli diagram, periodic, expansive}

\date{\today}

\dedicatory{}

\commby{}

\begin{abstract}
Downarowicz and Maass (2008) have shown that every Cantor minimal homeomorphism
 with finite topological rank $K > 1$ is expansive.
Bezuglyi, Kwiatkowski, and Medynets (2009)
 extended the result to non-minimal aperiodic cases.
In this paper, we show that all finite-rank zero-dimensional
 systems are expansive or have infinite odometer systems;
this is an extension of the two aforementioned results.
Nevertheless, the methods follow similar approaches.
\end{abstract}

\maketitle

\section{Introduction}
Herman, Putnam, and Skau
 \cite{HERMAN_1992OrdBratteliDiagDimGroupTopDyn} have shown that
 a zero-dimensional system is essentially minimal if and only if
 it is represented as the Bratteli--Vershik system of
 an essentially simple ordered Bratteli diagram
 (see \cref{defn:essentially-simple}).
In \cite[Proposition 2.2]{Akin_2003LiYorkeSens},
 Akin and Kolyada completely characterized
 proximal topological dynamical systems.
According to them, a topological dynamical system
 $(X,f)$ is proximal if and only if it is essentially minimal and the
 unique minimal set is a fixed point.
Thus, zero-dimensional proximal systems have Bratteli--Vershik
 representations.
In \cite{DANILENKO_2001StrongOrbEquivLocalCompCantorMinSys,Matui_2002TopOrbEquivLocalCompCantorMinSys},
 Danilenko and Matui studied not only Cantor minimal systems
 but also locally compact Cantor minimal systems.
Locally compact Cantor minimal systems become proximal
 by one-point compactification.
Thus, it is necessary to study the quality of essentially minimal systems whose minimal sets
 are fixed points.
In this paper, a \textit{zero-dimensional system}
 implies a pair $(X,f)$
 of a compact zero-dimensional metrizable space $X$
 and a homeomorphism $f : X \to X$.
\textit{Odometer systems} are always infinite.
In this paper, we show that every finite-rank homeomorphic
 Bratteli--Vershik system without odometer systems is symbolic.
This is an elaborate task.
For zero-dimensional minimal systems,
 Downarowicz and Maass
 \cite{DOWNAROWICZ_2008FiniteRankBratteliVershikDiagAreExpansive}
  presented a remarkable theorem that states that
 every zero-dimensional minimal system of finite topological rank $K > 1$
 is expansive.
They used properly ordered Bratteli diagrams and adopted a noteworthy technique.
In \cite{BEZUGLYI_2009AperioSubstSysBraDiag},
 Bezuglyi, Kwiatkowski, and Medynets extended the aforementioned result
 to non-minimal cases that do not have periodic orbits.
Further, we showed that every zero-dimensional system has non-trivial
 Bratteli--Vershik representations
 (see \cref{thm:Bratteli-Vershik-for-all}
 or \cite{Shimomura_2016BratteliVershikrepForAllZeroDimHomeo}).
In this paper,
 we show that the symbolicity
 still holds for all finite-rank homeomorphic Bratteli--Vershik systems
 without odometers, in which periodic orbits may be allowed.
Our main result is as follows:
 if a zero-dimensional system has finite topological rank
 and no odometers,
 then it is expansive (see \cref{thm:main}).
The design of the proof presented in this paper
 essentially follows
 \cite{DOWNAROWICZ_2008FiniteRankBratteliVershikDiagAreExpansive}
 and the observation by Bezuglyi, Kwiatkowski, and Medynets
 \cite{BEZUGLYI_2009AperioSubstSysBraDiag}.
\section{Preliminaries}\label{sec:preliminaries}
Let $\Z$ denote the set of all integers,
 and let $\Nonne$ denote the set of all non-negative integers.
\begin{defn}
A \textit{Bratteli diagram} is an infinite directed graph $(V,E)$,
 where $V$ is the vertex set and $E$ is the edge set.
These sets are partitioned into non-empty disjoint finite sets
$V = V_0 \cup V_1 \cup V_2 \cup \dotsb$ and $E = E_1 \cup E_2 \cup \dotsb$,
 where $V_0 = \seb v_0 \sen$ is a one-point set.
Each $E_n$ is a set of edges from $V_{n-1}$ to $V_n$.
Therefore, there exist two maps $r,s : E \to V$ such that $r:E_n \to V_n$
 and $s : E_n \to V_{n-1}$ for all $n \bpi$,
i.e., the \textit{range map} and the \textit{source map}, respectively.
Moreover, $s^{-1}(v) \nekuu$ for all $v \in V$ and
$r^{-1}(v) \nekuu$ for all $v \in V \setminus V_0$.
We say that $u \in V_{n-1}$ is connected to $v \in V_{n}$ if there
 exists an edge $e \in E_n$ such that $s(e) = u$ and $r(e) = v$.
The \textit{rank $K$} of a Bratteli diagram is defined as
 $K := \liminf_{n \to \infty}\hash V_n$,
 where $\hash V_n$ is the number of elements in $V_n$.
\end{defn}
%
%
%
%
Let $(V,E)$ be a Bratteli diagram and $m < n$ be non-negative integers.
We define
\[E_{m,n} :=
 \seb p \mid
 p \text{ is a path from a } u \in V_m \text{ to a } v \in V_n \sen.\]
For a path $p = (e_{m+1},e_{m+2},\dotsc,e_n) \in E_{m,n}$, we define
 the source map $s(p) := s(e_{m+1})$ and the range map $r(p) := r(e_{n})$.
Then, we can construct a new Bratteli diagram $(V',E')$ as follows:
\[ V' := V_0 \cup V_1 \cup \dotsb \cup V_m \cup V_n \cup V_{n+1} \cup \dotsb \]
\[ E' :=
 E_1 \cup E_2 \cup \dotsb \cup E_m \cup E_{m,n} \cup E_{n+1} \cup \dotsb. \]
This procedure is called {\it telescoping}.
For a path $p = (e_{m+1},e_{m+2},\dotsc,e_n) \in E_{m,n}$
 and $m \le a < b \le n$,
 we denote $p|_{[a,b]} := (e_{a+1},e_{a+2},\dotsc,e_b)$.
For $n \bni$, we denote $E_{n,\infty}$ as the set of all infinite paths
 from $V_n$.
For a path $p \in E_{n,\infty}$ and $n \le a < b < \infty$,
 we define $p|_{[a,b]}$ as before,
 and we also define $p|_{[a,\infty)}$.
Let $0 \le m < n \le \infty$ and $m \le a \le n$.
For $v \in V_a$,
 we denote $E_{m,n}(v) := \seb p \in E_{m,n} \mid p \text{ passes } v \sen$.
Let $p \in E_{m,n}$.
We define a closed and open set
 $C(p) := \seb x \in E_{0,\infty} \mid x|_{[m,n]} = p \sen$, and
 call it a \textit{cylinder}.
\begin{defn}
Let $(V,E)$ be a Bratteli diagram such that
$V = V_0 \cup V_1 \cup V_2 \cup \dotsb$ and $E = E_1 \cup E_2 \cup \dotsb$
 are the partitions,
 where $V_0 = \seb v_0 \sen$ is a one-point set.
Let $r,s : E \to V$ be the range map and source map, respectively.
We say that $(V,E,\le)$ is an \textit{ordered} Bratteli diagram if
 the partial order $\le$ is defined on $E$ such that
 $e, e' \in E$ are comparable if and only if $r(e) = r(e')$.
 In other words, we have a linear order on each set
 $r^{-1}(v)$ with $v \in V \setminus V_0$.
The edges $r^{-1}(v)$ are numbered from $1$ to $\hash(r^{-1}(v))$.
Because $r^{-1}(v)$ is linearly ordered,
 we denote the maximal edge $e(v,\max) \in r^{-1}(v)$
 and the minimal edge $e(v,\min) \in r^{-1}(v)$.
\end{defn}
Let $n > 0$ and
 $e = (e_n,e_{n+1},e_{n+2},\dotsc), e'=(e'_n,e'_{n+1},e'_{n+2},\dotsc)$
 be cofinal paths from the vertices of $V_{n-1}$, which might be different.
We obtain the lexicographic order $e < e'$ as follows:
\[\myif k \ge n \text{ is the largest number such that }
 e_k \ne e'_k, \mythen e_k < e'_k.\]%
\begin{defn}
Let $(V,E,\le)$ be an ordered Bratteli diagram.
Let $E_{\max}$ and $E_{\min}$ denote the sets of maximal and minimal
edges, respectively.
An infinite (resp. finite) path is maximal (resp. minimal)
 if all the edges constituting the
path are elements of $E_{\max}$ (resp. $E_{\min}$).
\end{defn}
\begin{defn}\label{defn:max-min-path}
For $0 \le m < n < \infty$,
 we denote $E_{m,n,\max} := \seb p \mid
 p \in E_{m,n} \myand \myif p = (e_{m+1},e_{m+2},\dotsc,e_n),
 \mythen e_i \in E_{\max} \myforall i\ (m+1 \le i \le n) \sen$.
We also denote $E_{m,n,\min}$ similarly.
In the same manner, we denote $E_{m,\infty,\max}$ and $E_{m,\infty,\min}$.
\end{defn}
\begin{defn}\label{defn:essentially-simple}
As in \cite{HERMAN_1992OrdBratteliDiagDimGroupTopDyn},
 an ordered Bratteli diagram $(V,E)$ is called
 \textit{essentially simple}
 if the following exist: a unique infinite path
 $p_{\max} = (e_{\max,1},e_{\max,2},\dotsc)$
 with $e_{\max,i} \in E_{\max} \cap E_i$ for all $i \bpi$,
 and a unique infinite path
 $p_{\min} = (e_{\min,1},e_{\min,2},\dotsc)$
 with $e_{\min,i} \in E_{\min} \cap E_i$ for all $i \bpi$.
\end{defn}
\begin{defn}[Vershik map]
Let $(V,E,\le)$ be an ordered Bratteli diagram.
Let $E_{0,\infty}$ be endowed with
 the subspace topology of the product space $\prod_{i = 1}^{\infty}E_i$,
 with the discrete topology on each $E_i$ $(1 \le i < \infty)$.
Suppose that there exists a bijective map
 $\phi : E_{0,\infty,\max} \to E_{0,\infty,\min}$.
Then, we can define a map
 $\phi : E_{0,\infty} \to E_{0,\infty}$ as follows:

\noindent If $e = (e_1,e_2,\dotsc) \ne E_{0,\infty,\max}$,
 then there exists the least $n \ge 1$ such that
 $e_n$ is not maximal in $r^{-1}(r(e_n))$.
Then, we can select the least $f_n > e_n$ in $r^{-1}(r(e_n))$.
Let $v_{n-1} = s(f_n)$.
Then, it is easy to obtain the unique least path $(f_1,f_2,\dotsc,f_{n-1})$
 from $v_0$ to $v_{n-1}$.
We define
\[\phi(e) := (f_1,f_2,\dotsc,f_{n-1},f_n,e_{n+1},e_{n+2},\dotsc).\]
By definition, the map $\phi$ is bijective.
Suppose that the map $\phi : E_{0,\infty} \to E_{0,\infty}$ is a homeomorphism.
 Then, it is called a \textit{Vershik map}.
The system $(E_{0,\infty},\phi)$ is a zero-dimensional system
 (see \cite{HERMAN_1992OrdBratteliDiagDimGroupTopDyn}),
 and it is called the \textit{Bratteli--Vershik system} of $(V,E,\le)$.
\end{defn}
If a zero-dimensional system $(X,f)$ is topologically conjugate to
 a Bratteli--Vershik system $(E_{0,\infty},\phi)$,
 then it is called a Bratteli--Vershik representation of $(X,f)$.
For each vertex $v \in V_n$ with $n \bpi$,
 the set of cylinders $\seb C(p) \mid p \in E_{0,n}, r(p) = v \sen$ constitutes
 a \textit{tower} that is denoted as
 $T(v) := \bigcup_{p \in E_{0,n}, r(p) = v}C(p)$.
We showed the following in \cite{Shimomura_2016BratteliVershikrepForAllZeroDimHomeo}:
\begin{thm}\label{thm:Bratteli-Vershik-for-all}
Let $(X,f)$ be a zero-dimensional system, and
let $0 < l_1 < l_2 < \dotsb$ be an arbitrary infinite sequence of integers.
Then, $(X,f)$ has a Bratteli--Vershik representation with
 an ordered Bratteli diagram
 $( \seb V_n \sen_{n \bni}, \seb E_n \sen_{n \bpi}\sen)$
 such that, if
$\hash E_{0,n}(v) \le l_n$
 for some $v \in V_n$, then there exists a sequence
 $(v_n = v,v_{n+1},v_{n+2},\dotsc)$
 of vertices $v_{n+i} \in V_{n+i}$ $(i \bni)$
 such that there exists an $e_{n+i+1} \in E_{n+i+1}$ with
 $\seb e_{n+i+1} \sen = r^{-1}(v_{n+i+1})$
 and $s(e_{n+i+1}) = v_{n+i}$ for all $i \bni$.
\end{thm}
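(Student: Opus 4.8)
The plan is to rerun the construction of \cite{Shimomura_2016BratteliVershikrepForAllZeroDimHomeo} — which builds a Bratteli--Vershik representation from a refining sequence of clopen \KR\ partitions of $(X,f)$, and which leaves one free to refine as finely as one wishes at each stage — while keeping track of the heights of the towers. For a vertex $v \in V_n$ write $h_n(v) := \hash E_{0,n}(v)$ for the height of $T(v)$. Everything hinges on the identity $h_{n+1}(w) = \sum_{e \in r^{-1}(w)} h_n(s(e))$ for $w \in V_{n+1}$, which yields two facts: heights are non-decreasing along directed paths; and if $w_n, w_{n+1}, w_{n+2}, \dotsc$ is an infinite directed path with $h_n(w_n) = h_{n+1}(w_{n+1}) = \dotsb$, then, since every other term in the sum is $\ge 1$, necessarily $r^{-1}(w_{i+1})$ consists of the single edge from $w_i$ to $w_{i+1}$ for all $i \bni$. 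Such a path is precisely the data asked for in the statement, so it suffices to produce a representation of $(X,f)$ in which every $v \in V_n$ with $h_n(v) \le l_n$ lies on an infinite directed path issued from $v$ along which $h$ is constant; call such a $v$ \emph{frozen}.

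The enemy is a vertex that is short ($h_n(v) \le l_n$) but not frozen. Whether $v$ is frozen is governed by a dichotomy: either $v$ is frozen, or there is a level $m > n$ with $h_m(v') > h_n(v)$ for every vertex $v'$ at level $m$ reachable from $v$. Indeed, apply König's lemma to the finitely branching tree of forward paths from $v$ along which $h$ has not yet exceeded $h_n(v)$; an infinite branch of it is a constant-height path and gives the first alternative, while a finite tree gives the second. Hence I must build the diagram so that at every level the short vertices are never of the second kind.

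I would proceed level by level, carrying a clopen \KR\ partition $\Pcal_n$ of $X$ and sorting its towers into \emph{self-contained} ones — those whose union is a clopen $f$-invariant subset of $X$ — and the rest. A self-contained tower of height $h$ is refined only inside its base, so its height is frozen at $h$ and it contributes a constant-height branch; whenever its height is $\le l_n$ at the level where it first appears we arrange, by telescoping, for that level to be recorded. A tower that is not self-contained is refined using a \KR\ sub-partition whose base has $f$-return time larger than $l_{n+1}$ — possible because off the (closed, invariant) set of points of period at most $l_{n+1}$, which can be separated by a clopen set, return times are unbounded — so that after the refinement every tower below it has height $> l_{n+1}$. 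Using the dichotomy, together with the fact that the only way a tower remains at a bounded height is by eventually becoming self-contained, one checks that after finitely many refinements all short vertices at the current level are frozen. Telescoping to the recorded levels gives an ordered Bratteli diagram, and that its Bratteli--Vershik system is topologically conjugate to $(X,f)$ via a genuine Vershik homeomorphism is inherited from \cite{Shimomura_2016BratteliVershikrepForAllZeroDimHomeo}.

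The step I expect to be hardest is this last, mostly combinatorial, one: refining and restacking every non-self-contained tower so as to force its height above $l_{n+1}$, while simultaneously (a) keeping the global partition a \KR\ partition, so that the edge order and the Vershik map remain well defined, (b) still generating the topology of $X$, and (c) reconciling the maximal- and minimal-edge data so that the inverse limit is literally conjugate to $(X,f)$. In particular one must choose the new tower bases with uniformly controlled return times across all the non-self-contained towers of a given level, and verify that ``not self-contained'' is exactly the condition under which a tower can be forced to grow without bound; the self-contained towers contribute only the harmless constant-height branches and can be described explicitly. I expect the dichotomy above and the unboundedness of return times off the small-period set to be what makes the bookkeeping close, in the spirit of \cite{DOWNAROWICZ_2008FiniteRankBratteliVershikDiagAreExpansive,BEZUGLYI_2009AperioSubstSysBraDiag}.
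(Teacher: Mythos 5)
Your opening reduction is sound: from the identity $h_{n+1}(w)=\sum_{e\in r^{-1}(w)}h_n(s(e))$ one does get that an infinite forward path of constant height is exactly an infinite single-incoming-edge chain, and your K\"onig-lemma dichotomy correctly isolates what has to be arranged. (For calibration: the paper does not prove this theorem, it imports it from \cite{Shimomura_2016BratteliVershikrepForAllZeroDimHomeo}, so your proposal is in effect a reconstruction of that argument.) The gap is in how you arrange it. You assert that the set of points of period at most $l_{n+1}$ ``can be separated by a clopen set,'' and, relying on this, that every tower which is not self-contained can be refined so that all of its sub-towers have height $>l_{n+1}$, whence ``the only way a tower remains at a bounded height is by eventually becoming self-contained.'' All three claims fail as soon as a periodic point of small period is non-isolated. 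Take $X=\Z\cup\{\infty\}$ with $f(n)=n+1$, $f(\infty)=\infty$ (exactly the proximal, one-point-compactification situation that motivates this paper). The fixed-point set $\{\infty\}$ is closed and invariant but not open, so no clopen set separates it. In any \KR\ partition the tower containing $\infty$ has height $1$ (a tower of height $h\ge 2$ cannot contain a fixed point, since its levels $f^j(B)$ are pairwise disjoint), so it can never be forced above height $1$; and once the partition separates any two points it is never self-contained, because the only nonempty clopen invariant subset of this $X$ is $X$ itself. So the dichotomy your induction rests on --- self-contained, or forceable to grow --- is false, and the induction never closes.

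Note that the theorem is still true in this example, and seeing why shows what the construction must actually deliver: the height-$1$ towers $U_1\supsetneq U_2\supsetneq\dotsb$ around $\infty$ shrink as sets and are never invariant, yet each $U_{m+1}$ is a single pass through $U_m$, so the corresponding vertices form exactly the required single-edge chain. That is, ``frozen'' in the sense of the conclusion is a property of the edge structure --- each short tower admits, at every subsequent level, a child consisting of exactly one pass through it --- and not the property that the tower stabilizes as a clopen invariant set. The construction therefore has to be organized around securing such single-pass children: writing $B$ for the base of a short tower of height $h$ and $B^{(m)}$ for the (nested, shrinking) union of level-$m$ bases, it suffices to keep $B\cap B^{(m)}\cap f^{-h}(B^{(m)})\ne\emptyset$ for all $m$, since by compactness this yields a point $x\in B$ with $f^h(x)$ returning to every $B^{(m)}$, whose towers form the constant-height chain. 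Self-containment is simply not available near non-isolated periodic points, so that part of your plan needs to be replaced rather than patched.
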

In \cite{DOWNAROWICZ_2008FiniteRankBratteliVershikDiagAreExpansive},
 Downarowicz and Maass introduced the topological rank for
 a Cantor minimal homeomorphism.
We define the same for all zero-dimensional systems.
\begin{defn}\label{defn:rank}
Let $(X,f)$ be a zero-dimensional system.
Then, the \textit{topological rank} of $(X,f)$ is $1 \le K \le \infty$
 if it has a Bratteli--Vershik representation with
 an ordered Bratteli diagram of rank $K$,
 and $K$ is the minimum of such numbers.
\end{defn}%
\noindent \textit{Question.}
In general, the topological rank in the context of all
 zero-dimensional systems is less than or equal to the original.
However, for Cantor minimal systems,
 the two topological ranks might be expected to coincide.

\vspace{3mm}

\section{Some Preparations and Related Results}
To prepare the proof of our main result,
 we essentially follow
 \cite{DOWNAROWICZ_2008FiniteRankBratteliVershikDiagAreExpansive}
 and the observation by Bezuglyi, Kwiatkowski, and Medynets \cite{BEZUGLYI_2009AperioSubstSysBraDiag}.
Let $(V,E,\ge)$ be an ordered Bratteli diagram.
For each $n \bpi$, we write $r_n := \hash V_n$
 and $V_n = \seb v_{n,1},v_{n,2},\dotsc, v_{n,r_n} \sen$.
We fix an $e = (e_1,e_2,\dotsc) \in E_{0,\infty}$.
For all $i \bi$,
 we denote $\phi^i(e) = (e_{1,i},e_{2,i},\dotsc)$.
Further, we denote $u_{n,i} = r(e_{n,i}) \in V_n$ $(n \bpi, i \bi)$.
For a vertex $v \in V_n$, let $l(v) := \hash E_{0,n}(v)$.
When $(e_{1,i},e_{2,i},\dotsc,e_{n,i})$ is minimal and $r(e_{n,i}) = v$,
 we change the symbol $u_{n,i} = v$ to $u_{n,i} = \check{v}$.
We write as $x_e := (u_{n,i})_{n \bpi, i \bi}$.
We define a shift map $(\sigma(x_e))_{n,i} = (u_{n,i+1})$
 $(n \bpi, i \bi)$.
Let
 $\barV_n
 := V_n \cup
 \seb \check{v}_{n,1}, \check{v}_{n,2}, \dotsc, \check{v}_{n,r_n} \sen$ with
 the discrete topology.
We consider the shift map
 $\sigma : \prod_{n \bpi}{\barV_n}^{\Z} \to \prod_{n \bpi}{\barV_n}^{\Z}$
 with the product topology.
There exists a dynamical embedding
 $\psi : (E_{0,\infty},\phi) \to (\prod_{n \bpi}{\barV_n}^{\Z},\sigma)$
 with $\psi(e) = x_e$.
We write $(X,f) := \psi(E_{0,\infty},\phi)$.
Fix an $n \bpi$ arbitrarily.
Following
 \cite{DOWNAROWICZ_2008FiniteRankBratteliVershikDiagAreExpansive},
 instead of using $\check{v}$,
 we make a \textit{cut} just before each occurrence of
 $\check{v}$ (see \cref{array-system}).
This coincides with the \textit{array system}
 (see \cite{DOWNAROWICZ_2008FiniteRankBratteliVershikDiagAreExpansive}).
Therefore, for each $x \in X$ and $n \bpi$,
 there exists a unique sequence
 $x[n]  := (\dotsc,u_{n,-2},u_{n,-1},u_{n,0},u_{n,1},\dotsc)$
 of vertices of $V_n$ that is separated by the cuts.
We make a convention $x[0] = (\dotsc,v_0,v_0,v_0,\dotsc)$
 that is cut everywhere.
We also write $x(n,i) := u_{n,i}$ for all $x \in X, n \bni$, and $i \bi$.
%
%
\begin{figure}
\begin{center}\leavevmode 
\xy
(0,18)*{}; (100,18)*{} **@{-},
 (48,16)*{v_{n,1}
 \hspace{6mm} v_{n,3} \hspace{6mm}
 \hspace{12mm} v_{n,1} \hspace{8mm}
 \hspace{5mm} v_{n,3} \hspace{5mm}
 \hspace{6mm} v_{n,2} \hspace{6mm}
 \hspace{4mm} v_{n,1}
 \hspace{2mm} },
(7,18)*{}; (7,14)*{} **@{-},
(28,18)*{}; (28,14)*{} **@{-},
(49,18)*{}; (49,14)*{} **@{-},
(70,18)*{}; (70,14)*{} **@{-},
(84,18)*{}; (84,14)*{} **@{-},
(0,14)*{}; (100,14)*{} **@{-},
 (55,12)*{v_{n+1,5}
 \hspace{36mm} v_{n+1,1} \hspace{6mm}
 \hspace{20mm} v_{n+1,3} \hspace{8mm} },
(28,14)*{}; (28,10)*{} **@{-},
(84,14)*{}; (84,10)*{} **@{-},
(0,10)*{}; (100,10)*{} **@{-},
\endxy
\end{center}
\caption{$n$th and $(n+1)$th rows of a linked array system with cuts.}
\label{array-system}
\end{figure}
%
%
%
%
%
For an interval $[n,m]$ with $m > n$, the combination of rows $x[n']$
 with $n \le n' \le m$ is denoted as $x[n,m]$.
The \textit{array system} of
 $x$ is the infinite combination $x[0,\infty)$
 of all rows $x[n]$ ($0 \le n < \infty$).
%
%
Note that for $m > n$, if there exists an $m$-cut at position $i$
 (just before position $i$), then
 there exists an $n$-cut at position $i$ (just before position $i$).
%
%
%
%
\begin{figure}
\begin{center}\leavevmode 
\xy
(0,22)*{}; (100,22)*{} **@{-},
 (50,20)*{v_0\phantom{{}_{{},f}}\ v_0\phantom{{}_{{},l}}
 \ v_0\phantom{{}_{{},l}}\ v_0\phantom{{}_{{},l}}
 \ v_0\phantom{{}_{{},f}}\ v_0\phantom{{}_{{},l}}
 \ v_0\phantom{{}_{{},l}}\ v_0\phantom{{}_{{},l}}
 \ v_0\phantom{{}_{{},f}}\ v_0\phantom{{}_{{},l}}
 \ v_0\phantom{{}_{{},l}}\ v_0\phantom{{}_{{},l}}
 \ v_0\phantom{{}_{{},f}}\ v_0\phantom{{}_{{},l}}},
(0,22)*{}; (0,18)*{} **@{-},
(7,22)*{}; (7,18)*{} **@{-},
(14,22)*{}; (14,18)*{} **@{-},
(21,22)*{}; (21,18)*{} **@{-},
(28,22)*{}; (28,18)*{} **@{-},
(35,22)*{}; (35,18)*{} **@{-},
(42,22)*{}; (42,18)*{} **@{-},
(49,22)*{}; (49,18)*{} **@{-},
(56,22)*{}; (56,18)*{} **@{-},
(63,22)*{}; (63,18)*{} **@{-},
(70,22)*{}; (70,18)*{} **@{-},
(77,22)*{}; (77,18)*{} **@{-},
(84,22)*{}; (84,18)*{} **@{-},
(91,22)*{}; (91,18)*{} **@{-},
(98,22)*{}; (98,18)*{} **@{-},
(0,18)*{}; (100,18)*{} **@{-},
 (49,16)*{c_{1,3}
 \hspace{8mm} v_{1,3} \hspace{6mm}
 \hspace{11mm} v_{1,1} \hspace{8mm}
 \hspace{7mm} v_{1,3} \hspace{5mm}
 \hspace{6mm} v_{1,2} \hspace{6mm}
 \hspace{4mm} v_{1,1}
 \hspace{2mm} },
(7,18)*{}; (7,14)*{} **@{-},
(28,18)*{}; (28,14)*{} **@{-},
(49,18)*{}; (49,14)*{} **@{-},
(70,18)*{}; (70,14)*{} **@{-},
(84,18)*{}; (84,14)*{} **@{-},
(0,14)*{}; (100,14)*{} **@{-},
(56,12)*{v_{2,3} \hspace{0.7mm}\
 \hspace{32mm} v_{2,1} \hspace{27mm}
 \hspace{10mm} v_{2,3}
 \hspace{2mm} },
(28,14)*{}; (28,10)*{} **@{-},
(84,14)*{}; (84,10)*{} **@{-},
(0,10)*{}; (100,10)*{} **@{-},
(36,8)*{c_{3,3} \hspace{26mm}
 \hspace{15mm} v_{3,1}},
(28,10)*{}; (28,6)*{} **@{-},
(0,6)*{}; (100,6)*{} **@{-},
(50,4)*{\vdots},
\endxy
\end{center}
\caption{The first 4 rows of an array system.}\label{array-system-2}
\end{figure}
%
%
%
%
%
For each vertex $v_{n,i}$, if we write

 $\seb e_1 < e_2 < \dotsb < e_{k(n,i)} \sen = r^{-1}(v_{n,i})$,
 we can determine a series of vertices
  $v_{n-1,a(n,i,1)} v_{n-1,a(n,i,2)} \dotsb v_{n-1,a(n,i,k(n,i))}$
 such that $v_{n-1,a(n,i,j)} = s(e_j)$ $(1 \le j \le k(n,i))$.
Furthermore,
 each $v_{n-1,a(n,i,j)}$ determines a series of vertices of level $n-2$
 similarly.
Thus, we can determine a set of vertices arranged in a square form as in
 \cref{2-symbol}.
Following \cite{DOWNAROWICZ_2008FiniteRankBratteliVershikDiagAreExpansive},
 this form is said to be the \textit{$n$-symbol} and denoted by $v_{n,i}$.
For $m < n$,
 the projection $v_{n,i}[m]$ that is a finite sequence of vertices of $V_m$
 is also defined.
\begin{figure}
\begin{center}\leavevmode 
\xy
(28,32)*{}; (84,32)*{} **@{-},
(57,30)*{v_0\phantom{{}_{{},l}}\ v_0\phantom{{}_{{},l}}
 \ v_0\phantom{{}_{{},l}}\ v_0\phantom{{}_{{},l}}
 \ v_0\phantom{{}_{{},f}}\ v_0\phantom{{}_{{},l}}
 \ v_0\phantom{{}_{{},l}}\ v_0\phantom{{}_{{},l}}},
(28,32)*{}; (28,28)*{} **@{-},
(35,32)*{}; (35,28)*{} **@{-},
(42,32)*{}; (42,28)*{} **@{-},
(49,32)*{}; (49,28)*{} **@{-},
(56,32)*{}; (56,28)*{} **@{-},
(63,32)*{}; (63,28)*{} **@{-},
(70,32)*{}; (70,28)*{} **@{-},
(77,32)*{}; (77,28)*{} **@{-},
(84,32)*{}; (84,28)*{} **@{-},
(28,28)*{}; (84,28)*{} **@{-},
 (60,26)*{
 \hspace{7mm} v_{1,1} \hspace{8mm}
 \hspace{7mm} v_{1,3} \hspace{6mm}
 \hspace{6mm} v_{1,2} \hspace{8mm}
 \hspace{2mm} },
(28,28)*{}; (28,24)*{} **@{-},
(49,28)*{}; (49,24)*{} **@{-},
(70,28)*{}; (70,24)*{} **@{-},
(84,28)*{}; (84,24)*{} **@{-},
(28,24)*{}; (84,24)*{} **@{-},
(60,22)*{
 \hspace{23mm} v_{2,1} \hspace{23mm}
 \hspace{2mm} },
(28,24)*{}; (28,20)*{} **@{-},
(84,24)*{}; (84,20)*{} **@{-},
(28,20)*{}; (84,20)*{} **@{-},
\endxy
\end{center}
\caption{The $2$-symbol corresponding to the vertex $v_{2,1}$
 of \cref{array-system-2}.}\label{2-symbol}
\end{figure}
%
%
%
%
%
%
It is clear that $x[n] = x'[n]$ implies that $x[0,n] = x'[0,n]$.
If $x \ne x'$ $(x, x' \in X)$,
 then there exists an $n > 0$ with $x[n] \ne x'[n]$.
For $x, x' \in X$,
 we say that the pair $(x,x')$ is \textit{$n$-compatible}
 if $x[n] = x'[n]$.
If $x[n] \ne x'[n]$,
 then we say that $x$ and $x'$ are \textit{$n$-separated}.
We recall that
 if there exists an $n$-cut at position $k$,
 then there exists an $m$-cut at position
 $k$ for all $0 \le m \le n$.
Let $x \ne x'$.
%
%
If a pair $(x,x')$ is $n$-compatible and $(n+1)$-separated,
 then we say that the \textit{depth of compatibility} of $x$ and $x'$ is $n$,
 or the pair $(x,x')$ has \textit{depth} $n$.
If $(x,x')$ is a pair of depth $n$
 and $(x,x'')$ is a pair of depth $m > n$,
 then the pair $(x',x'')$ has depth $n$ (hence, never equal).
%
%
An $n$-separated pair $(x,x')$ is said to \textit{have a common $n$-cut} if
 both $x$ and $x'$ have an $n$-cut at the same position.
If a pair has a common $n$-cut, then it also has a common $m$-cut
 for all $m$ ($0 \le m \le n$).
The set $X_n := \seb x[n] \mid x \in X \sen$ is a two-sided subshift
 of a finite set
 $\barV_n$.
 Just after the $n$-cuts, we have changed each symbol $v_{n,i}$ to
 $\check{v}_{n,i}$.
Thus, $X_n$ is a two-sided subshift of a finite set
 $\barV_n$.
The factoring map is denoted by $\pi_n : X \to X_n$,
 and the shift map is denoted by $\sigma_n : X_n \to X_n$.
We simply write $\sigma = \sigma_n$ for all $n$
 if there is no confusion.
Let $(Y,g)$ be a subsystem of $(E_{0,\infty},\phi)$, i.e.,
 $Y \subset E_{0,\infty}$ is closed,
 $\phi(Y) = Y$, and $g = \phi|_Y$.
Let $V' := \seb v \in V \mid T(v) \cap Y \nekuu \sen$.
We note that for all $p,p'$ with $r(p) = r(p) \in V_n$ ($n \bpi$),
 $C(p) \cap Y \nekuu$ if and only if $C(p') \cap Y \nekuu$.
Thus, for each $n > 0$,
 $p \in E_{0,n}$ satisfies $C(p) \cap Y \nekuu$
 if and only if $r(p) \in V'$.
Let $E' := \seb e \in E \mid r(e) \in V' \sen$.
Then, it is easy to check that $(V',E')$ is a Bratteli diagram.
We can give $(V',E')$ the order $\ge$ induced from the original.
All the maximal (resp. minimal) paths of $(V',E',\ge)$
 are maximal (resp. minimal) paths of $(V,E,\ge)$.
Let $Y_{\min}$ ($Y_{\max}$) be the set of all minimal (maximal)
 paths of $(V',E',\ge)$.
Then, it is obvious that $\phi(Y_{\min}) = Y_{\max}$.
Now, $(V',E',\ge)$ is an ordered Bratteli diagram, and the Bratteli--Vershik
 system that is topologically conjugate to $(Y,g)$ is defined.
Therefore, we get the following:
\begin{thm}\label{thm:subsystem}
Let $(X,f)$ be a zero-dimensional system of finite topological rank $K$.
Then, its subsystem has topological rank $\le K$.
\end{thm}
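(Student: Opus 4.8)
The plan is to exploit \cref{thm:Bratteli-Vershik-for-all} together with the construction of an ordered Bratteli diagram $(V',E',\ge)$ from a subsystem $(Y,g)$ that was carried out in the paragraph immediately preceding the statement. Concretely, suppose $(X,f)$ has topological rank $K$ and let $(V,E,\ge)$ be an ordered Bratteli diagram of rank $K$ whose Bratteli--Vershik system $(E_{0,\infty},\phi)$ is topologically conjugate to $(X,f)$. Let $(Z,h)$ be a subsystem of $(X,f)$; transporting along the conjugacy, we may assume $(Z,h) = (Y,g)$ is a subsystem of $(E_{0,\infty},\phi)$ in the sense defined above, so $Y\subseteq E_{0,\infty}$ is closed, $\phi(Y)=Y$, and $g=\phi|_Y$. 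The excerpt already shows that the induced $(V',E',\ge)$ is an ordered Bratteli diagram whose Bratteli--Vershik system is topologically conjugate to $(Y,g)$. So it remains only to bound the rank of $(V',E',\ge)$.

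The key observation is that $V'_n := V' \cap V_n \subseteq V_n$ for every $n$, since $V' = \{\, v \in V \mid T(v)\cap Y \nekuu \,\}$ is defined level by level. Hence $\hash V'_n \le \hash V_n$ for all $n \bpi$, and therefore
\[
\liminf_{n\to\infty}\hash V'_n \;\le\; \liminf_{n\to\infty}\hash V_n \;=\; K .
\]
Thus the ordered Bratteli diagram $(V',E',\ge)$ has rank $\le K$, and since its Bratteli--Vershik system is a Bratteli--Vershik representation of $(Z,h)$, the topological rank of $(Z,h)$ is $\le K$ by \cref{defn:rank}. This completes the argument.

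The only genuine point requiring care — and the step I expect to be the main obstacle — is verifying that $(V',E',\ge)$ really does produce a \emph{legitimate} ordered Bratteli diagram representing $(Y,g)$, i.e.\ that the restriction to $V'$ preserves all the structural requirements: that $s^{-1}(v)\cap E' \nekuu$ for $v \in V'$, that $r^{-1}(v)\cap E' \nekuu$ for $v \in V'\setminus V_0$, that the partial order restricts correctly, and that the resulting Vershik map is a homeomorphism conjugate to $g$. All of this is asserted in the paragraph preceding the statement (the nonemptiness of $s^{-1}(v)\cap E'$ follows because $\phi(Y)=Y$ forces downward-closure of $V'$ under $r$; surjectivity of $\phi|_Y$ onto $Y$ gives what is needed on the minimal/maximal paths), so in the write-up I would simply cite that discussion rather than redo it, and the rank bound itself is the trivial inequality above.
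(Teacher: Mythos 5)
Your proposal is correct and follows essentially the same route as the paper: the paper's entire argument is the construction of the restricted ordered Bratteli diagram $(V',E',\ge)$ in the paragraph preceding the theorem, with the rank bound left implicit in the inclusion $V'\cap V_n\subseteq V_n$, which you make explicit via $\liminf_n \hash(V'\cap V_n)\le\liminf_n\hash V_n=K$. Your closing caveat about verifying that $(V',E',\ge)$ is a legitimate ordered Bratteli diagram representing $(Y,g)$ is exactly the content the paper asserts (note that $r^{-1}(v)\cap E'=r^{-1}(v)$ for $v\in V'$, so the orders are literally unchanged), so deferring to that discussion is appropriate.
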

The next proposition follows \cite[Proposition 4.6]{BEZUGLYI_2009AperioSubstSysBraDiag}.
Nevertheless, because the proposition was previously described in the context of
 aperiodic systems, we present the proof here as well.
We fix a metric $d$ on $E_{0,\infty}$.
Let $M, M' \subseteq E_{0,\infty}$ be closed sets.
Then, we denote
\[\dist(M,M') := \min_{x \in M,\ y \in M'}d(x,y).\]
For a closed set $M \subseteq E_{0,\infty}$, we denote
 $\diam(M) := \max_{x,y \in M}d(x,y)$.
\begin{prop}\label{prop:finite-minimal-set}
Let $(X,f)$ be a zero-dimensional system of finite rank $K$.
Then, $(X,f)$ has at most $K$ minimal sets.
\end{prop}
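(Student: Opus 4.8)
I would prove that the number of minimal sets is bounded by $K$ by localizing the rank-$K$ hypothesis near each minimal set and using a pigeonhole argument over the levels of the Bratteli diagram. First I would fix a Bratteli--Vershik representation $(E_{0,\infty},\phi)$ of $(X,f)$ realizing the topological rank, so that $\liminf_n \hash V_n = K$; pick an increasing sequence of levels $n_1 < n_2 < \dotsb$ along which $\hash V_{n_j} = K$ for all $j$. Suppose for contradiction that $(X,f)$ has at least $K+1$ distinct minimal sets $M_0, M_1, \dotsc, M_K$. These are pairwise disjoint compact invariant sets, so $\delta := \min_{i \ne i'} \dist(M_i, M_{i'}) > 0$.

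The key observation is that each minimal set $M_i$, being closed and $\phi$-invariant, determines via the subsystem construction recalled just before \cref{thm:subsystem} a sub-Bratteli diagram $(V^{(i)}, E^{(i)})$ with $V^{(i)}_n = \seb v \in V_n \mid T(v) \cap M_i \nekuu \sen$; moreover the associated Bratteli--Vershik system is conjugate to $(M_i, \phi|_{M_i})$. The plan is to show that for $n$ large enough, the sets $V^{(0)}_n, \dotsc, V^{(K)}_n$ are pairwise disjoint, which immediately forces $\hash V_n \ge K+1$ along all large $n$, contradicting $\hash V_{n_j} = K$. To get the disjointness: since $\diam(T(v))$ — more precisely, the diameter of the set $\bigcup_{r(p)=v,\, p \in E_{0,n}} C(p)$ of all points whose level-$n$ vertex is $v$ — tends to $0$ uniformly in $v \in V_n$ as $n \to \infty$ (this is where the metric on $E_{0,\infty}$ and compactness enter), there is an $N$ such that for $n \ge N$ every tower $T(v)$ with $v \in V_n$ has diameter $< \delta$. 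But if some vertex $v \in V_n$ lay in both $V^{(i)}_n$ and $V^{(i')}_n$ with $i \ne i'$, then $T(v)$ would meet both $M_i$ and $M_{i'}$, giving two points at distance $\ge \delta$ inside a set of diameter $< \delta$ — a contradiction. Hence the $V^{(i)}_n$ are disjoint for $n \ge N$, and choosing $n_j \ge N$ yields $K+1 \le \hash V_{n_j} = K$, the desired contradiction.

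One subtlety I would need to handle carefully is that the representation realizing the rank is fixed \emph{before} passing to the minimal subsystems, so the $V^{(i)}_n$ are genuinely subsets of the same $V_n$ — this is exactly what makes the pigeonhole work, and it is why I insist on working inside one global diagram rather than invoking \cref{thm:subsystem} abstractly for each $M_i$ separately. I should also check that each $V^{(i)}_n$ is nonempty, which is clear since $M_i \ne \emptyset$ and every point of $E_{0,\infty}$ passes through exactly one vertex of $V_n$. The rest (that $T(v) \cap M_i \nekuu$ for some $v$ at each level, and that the $T(v)$ cover $E_{0,\infty}$) is immediate from the definitions.

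**The main obstacle.** The only nontrivial point is the uniform shrinking of towers: that $\max_{v \in V_n} \diam\bigl(\bigcup_{r(p)=v} C(p)\bigr) \to 0$. This is not automatic for an arbitrary metric on the path space — a priori one could choose a pathological metric — but it does hold for the standard product metric, and more conceptually it holds because $\seb \pi_n^{-1}(\text{vertex}) \sen$, or rather the partition of $E_{0,\infty}$ into the sets $\bigcup_{r(p)=v}C(p)$ over $v \in V_n$, refines to the partition into points as $n \to \infty$; combined with compactness this forces the mesh to go to $0$. I would state this as a short lemma (or fold it into the proof), noting that we are free to fix $d$ to be the product metric at the outset, exactly as is done implicitly in the paragraph preceding the proposition where $\dist$ and $\diam$ are introduced. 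With that in hand the argument is a clean pigeonhole and contains no real calculation.
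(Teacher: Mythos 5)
There is a genuine gap, and it sits exactly where you flagged ``the main obstacle.'' The claim that $\diam\bigl(\bigcup_{p \in E_{0,n},\, r(p)=v} C(p)\bigr) \to 0$ uniformly in $v \in V_n$ is false, and so is the justification you offer for it: the partition of $E_{0,\infty}$ into the towers $T(v)$, $v \in V_n$, has only $\hash V_n$ pieces, and by the finite-rank hypothesis $\hash V_n = K$ along a subsequence of levels. A compact metric space that is partitioned into $K$ pieces of arbitrarily small diameter has at most $K$ points, so the towers cannot shrink unless $X$ is finite. What \emph{does} shrink is each individual cylinder $C(p)$ for $p \in E_{0,n}$ (the partition into cylinders, not into towers, refines to points); a tower $T(v)$ is a union of $l(v) = \hash E_{0,n}(v)$ such cylinders spread across the space, and typically $l(v) \to \infty$. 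Consequently your disjointness step --- ``if $v$ lay in both $V^{(i)}_n$ and $V^{(i')}_n$ then $T(v)$ would contain two points at distance $\ge \delta$ inside a set of diameter $< \delta$'' --- does not follow as written.

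The missing ingredient, which is the actual content of the paper's proof, is the invariance of the minimal sets combined with the tower structure: the cylinders $\seb C(p) \mid p \in E_{0,n},\ r(p) = v \sen$ are the floors of a tower, each floor being carried to the next by $f$, so an $f$-invariant set $M_i$ meets $T(v)$ if and only if it meets \emph{every} floor $C(p)$ with $r(p) = v$. Hence if a single vertex $v$ served two minimal sets $M_i$ and $M_{i'}$, some one cylinder $C(p)$ would meet both, and since one may choose $n$ with $\max\seb \diam(C(p)) \mid p \in E_{0,n}\sen < \dist(M_i, M_{i'})$ this is impossible. With that replacement (small cylinders plus invariance, rather than small towers), your pigeonhole argument --- the sets $V^{(i)}_n$ are nonempty and pairwise disjoint, so $\hash V_n \ge K+1$ at a level where $\hash V_n = K$ --- goes through and is essentially the paper's proof.
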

\begin{proof}
Suppose that there exist $K+1$ minimal sets, namely $M_1,M_2,\dotsc,M_{K+1}$.
Take $\ep > 0$ such that
 $\ep < \min_{1 \le i < j \le K+1} \dist(M_i, M_j)$
and $n > 0$ such that $\max \seb \diam(C(p)) \mid p \in E_{0,n} \sen \le \ep$.
For each $i\ (1 \le i \le K+1)$,
 we define $P_i := \seb p \in E_{0,n} \mid C(p) \cap M_i \nekuu \sen$.
It is evident that $P_i \nekuu$ for all $i\ (1 \le i \le K+1)$.
It is also obvious that $P_i \cap P_j \iskuu$ for $i \ne j$.
We note that for each $v \in V_n$,
 $\seb C(p) \mid p \in E_{0,n}, r(p) = v \sen$
 is a tower that constructs $T(v)$.
Thus, for each $v \in V_n$ and $i\ (1 \le i \le K+1)$,
 only one of $\seb p \in E_{0,n} \mid r(p) = v \sen \subseteq P_i$
 or $\seb p \in E_{0,n} \mid r(p) = v \sen \cap P_i \iskuu$ occurs,
 which is a contradiction.
\end{proof}

\section{Main Theorem.}
%
%
Before the statement of our main theorem, we choose
 a small $\ep_0 > 0$.
Let $M_1, M_2, \dotsc, M_{K'}$ ($K' \le K$)
 be the list of all the minimal sets of $(E_{0,\infty},\phi)$.
Fix $\ep_0 > 0$ such that
 $\ep_0 < \frac{1}{3} \min \seb \dist(M_i,M_j) \mid 1 \le i < j \le K' \sen$.
Suppose that there exist $x \ne y \in E_{0,\infty}$ such that
 $\limsup_{n \to +\infty} d(f^n(x),f^n(y)) \le \ep_0$.
Then, it is easy to see that the minimal sets of $\omega(x)$ and $\omega(y)$
 coincide.
Thus, we get the next two lemmas:
\begin{lem}\label{lem:ep0-omega}
Let $x,y \in E_{0,\infty}$.
Then, we get the following:
\itemb
\item if $\limsup_{n \to +\infty}d(f^n(x),f^n(y)) \le \ep_0$,
 then the minimal sets of $\omega(x)$ and $\omega(y)$ coincide.
\itemn
\end{lem}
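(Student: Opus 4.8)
The plan is to prove only the inclusion ``every minimal set contained in $\omega(x)$ is contained in $\omega(y)$''; the reverse inclusion then follows by symmetry, since the hypothesis $\limsup_{n\to+\infty} d(f^n(x),f^n(y)) \le \ep_0$ is symmetric in $x$ and $y$. So I would fix a minimal set $M \subseteq \omega(x)$. Because $M$ is one of the finitely many minimal sets $M_1,\dotsc,M_{K'}$ (\cref{prop:finite-minimal-set}) and $\ep_0$ was chosen so that $\ep_0 < \dist(M_i,M_j)$ for all distinct $i,j$, any minimal set $M'$ with $\dist(M,M') \le \ep_0$ is forced to equal $M$. Hence it suffices to exhibit a minimal set $M' \subseteq \omega(y)$ with $\dist(M,M') \le \ep_0$.

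To build such an $M'$: pick any $z \in M$. Since $M \subseteq \omega(x)$, there is $n_k \to +\infty$ with $f^{n_k}(x) \to z$; passing to a subsequence I may also assume $f^{n_k}(y) \to w$ for some $w$, and then $w \in \omega(y)$ because $n_k \to +\infty$. The key point is that $z$ and $w$ are synchronized $\ep_0$-close: for each fixed $j \ge 0$, continuity gives $f^{n_k+j}(x) \to f^j(z)$ and $f^{n_k+j}(y) \to f^j(w)$, while $\limsup_{n\to+\infty} d(f^n(x),f^n(y)) \le \ep_0$ then forces $d(f^j(z),f^j(w)) \le \ep_0$. Now $\omega(w)$ is a nonempty closed $f$-invariant subset of $\omega(y)$, so it contains a minimal set $M'$. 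To see $\dist(M,M') \le \ep_0$, take $m' \in M'$; since $m' \in \omega(w)$ there is $m_j \to +\infty$ with $f^{m_j}(w) \to m'$, and along a subsequence $f^{m_j}(z) \to z'$ where $z' \in \omega(z) = M$ because $z$ lies in the minimal set $M$; then $d(z',m') = \lim_j d(f^{m_j}(z),f^{m_j}(w)) \le \ep_0$.

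Putting the pieces together, $M' \subseteq \omega(y)$ is a minimal set with $\dist(M,M') \le \ep_0$, so $M' = M$ and hence $M \subseteq \omega(y)$; exchanging the roles of $x$ and $y$ gives the opposite inclusion. The ingredients I would cite without proof are the standard facts that an $\omega$-limit set is nonempty, closed and $f$-invariant (compactness of $E_{0,\infty}$ and invertibility of $f$), that every nonempty closed $f$-invariant set contains a minimal set (Zorn's lemma), and that $\omega(z) = M$ whenever $z$ belongs to a minimal set $M$. I expect the synchronization step to be the one genuine hinge of the argument: choosing $z$ and $w$ as limits along a \emph{common} subsequence is exactly what makes $d(f^j(z),f^j(w)) \le \ep_0$ hold for all $j$, and this is what lets the minimality of $M$ be transferred into $\omega(y)$; the rest is routine.
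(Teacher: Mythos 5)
Your proof is correct and complete; the paper itself offers no argument here beyond asserting that the claim "is easy to see," and your synchronization-along-a-common-subsequence argument, combined with the finiteness of the set of minimal sets (\cref{prop:finite-minimal-set}) and the choice $\ep_0 < \frac{1}{3}\min_{i<j}\dist(M_i,M_j)$, is exactly the natural way to justify it. The only cosmetic remark is that you only use $\ep_0 < \dist(M_i,M_j)$, which is weaker than what the paper's choice of $\ep_0$ guarantees, so everything is consistent.
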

\begin{lem}\label{lem:ep0-alpha}
Let $x,y \in E_{0,\infty}$.
Then, we get the following:
\itemb
\item if $\limsup_{n \to -\infty}d(f^n(x),f^n(y)) \le \ep_0$,
 then the minimal sets of $\alpha(x)$ and $\alpha(y)$ coincide.
\itemn
\end{lem}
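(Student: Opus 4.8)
The claim to prove is \cref{lem:ep0-alpha}: if $\limsup_{n\to-\infty}d(f^n(x),f^n(y))\le\ep_0$, then the minimal sets of $\alpha(x)$ and $\alpha(y)$ coincide.

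The plan is to reduce this statement to the already-established \cref{lem:ep0-omega} by passing to the inverse system. Concretely, I would consider $(E_{0,\infty},\phi^{-1})$; observe that the hypothesis $\limsup_{n\to-\infty}d(f^n(x),f^n(y))\le\ep_0$ is exactly the statement $\limsup_{m\to+\infty}d((f^{-1})^m(x),(f^{-1})^m(y))\le\ep_0$. The issue is that \cref{lem:ep0-omega} was proved using a specific $\ep_0$ tied to the minimal sets of the \emph{forward} system; but note that the minimal sets of $(E_{0,\infty},\phi)$ and of $(E_{0,\infty},\phi^{-1})$ are literally the same closed sets (a set is minimal and $\phi$-invariant iff it is minimal and $\phi^{-1}$-invariant), so the choice of $\ep_0$ made just before \cref{lem:ep0-omega} serves equally well for the inverse system. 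Also $\alpha$-limit sets for $f$ are $\omega$-limit sets for $f^{-1}$, and the minimal sets contained in $\omega_{f^{-1}}(x)$ are the same subsets whether one views them inside the $f$- or $f^{-1}$-dynamics. Hence applying \cref{lem:ep0-omega} to $f^{-1}$ gives that the minimal sets of $\omega_{f^{-1}}(x)=\alpha_f(x)$ and $\omega_{f^{-1}}(y)=\alpha_f(y)$ coincide, which is the desired conclusion.

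Alternatively, if one prefers not to invoke the inverse-system symmetry as a black box, I would just repeat the short argument behind \cref{lem:ep0-omega} directly: take any minimal set $M\subseteq\alpha(x)$. Since $M$ is minimal and $(X,f)$ has only finitely many minimal sets $M_1,\dots,M_{K'}$ with pairwise distances exceeding $3\ep_0$, there is a sequence $n_k\to-\infty$ with $f^{n_k}(x)$ converging into an $\ep_0$-neighborhood of $M=M_i$ for some fixed $i$; by the hypothesis, $f^{n_k}(y)$ then lies within $2\ep_0$ of $M_i$ for large $k$, so by the choice of $\ep_0$ the accumulation points of $(f^{n_k}(y))$ that are in a minimal set must lie in $M_i$, forcing $M_i\subseteq\alpha(y)$. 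Symmetry in $x,y$ gives the reverse inclusion, and since there are finitely many minimal sets the two collections of minimal sets coincide.

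The only mildly delicate point — and the one I would state carefully — is the passage "$f^{n_k}(x)$ accumulates near a minimal set." This uses that every point's $\alpha$-limit set is nonempty and compact and therefore contains a minimal set (Zorn's lemma), together with the fact that $\ep_0$ was chosen to be less than a third of the minimal pairwise distance among the $M_j$, so that a point within $2\ep_0$ of $M_i$ cannot simultaneously be within $\ep_0$ of any other $M_j$; this rigidity is what pins down which minimal set is approached. Everything else is a routine transcription of the proof of \cref{lem:ep0-omega} with $n\to+\infty$ replaced by $n\to-\infty$, so I expect no real obstacle. I would present the inverse-system reduction as the clean one-line proof and omit the repeated computation.
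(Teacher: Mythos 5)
Your primary argument---reducing \cref{lem:ep0-alpha} to \cref{lem:ep0-omega} by passing to $(E_{0,\infty},f^{-1})$---is correct, and it is essentially what the paper does: the paper offers no separate proof of either lemma, deriving both from the single remark preceding \cref{lem:ep0-omega}, and your observations that the minimal sets of $f$ and $f^{-1}$ coincide (so the choice of $\ep_0$ is unaffected) and that $\alpha_f=\omega_{f^{-1}}$ are exactly the points that make the symmetry legitimate.

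However, the ``alternative'' direct argument you sketch has a genuine gap at the step ``the accumulation points of $(f^{n_k}(y))$ that are in a minimal set must lie in $M_i$, forcing $M_i\subseteq\alpha(y)$.'' The set of accumulation points of $y$ along the particular subsequence $(n_k)$ is closed but need not be $f$-invariant (if $f^{n_k}(y)\to w$, then $f(w)$ is an accumulation point at the times $n_k+1$, which need not belong to $\{n_k\}$), so it need not contain any point of any minimal set; knowing that these accumulation points lie within $2\ep_0$ of $M_i$ therefore does not by itself show that $\alpha(y)$ meets $M_i$. To repair this one must produce an \emph{invariant} set of accumulation points: for instance, let $A$ be the $\omega$-limit set of the pair $(x,y)$ under $f^{-1}\times f^{-1}$; then $A$ is closed, invariant, contained in $\seb (u,v) \mid d(u,v)\le\ep_0 \sen$, and projects onto $\alpha(x)$ and $\alpha(y)$. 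The slice $A\cap(M_i\times E_{0,\infty})$ is a nonempty closed invariant set whose second projection is a closed invariant subset of $\alpha(y)$ lying in the closed $\ep_0$-neighborhood of $M_i$; it contains a minimal set $M_j$ with $\dist(M_j,M_i)\le\ep_0$, and the choice $\ep_0<\frac13\min_{i\ne j}\dist(M_i,M_j)$ forces $M_j=M_i$, whence $M_i\subseteq\alpha(y)$. (A diagonal argument producing a full orbit $\{w_l\}_{l\bi}\subseteq\alpha(y)$ with $d(w_l,f^l(z_0))\le\ep_0$ for some $z_0\in M_i$ accomplishes the same.) Since you state that you would submit only the inverse-system reduction, your proof as intended stands---but note that the reduction merely transfers the burden to \cref{lem:ep0-omega}, whose proof contains this same subtlety behind the paper's ``it is easy to see.''
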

From the two lemmas stated above, we get the next lemma:
\begin{lem}\label{lem:ep0}
Let $x,y \in E_{0,\infty}$.
If $\sup_{n \bi}d(f^n(x),f^n(y)) \le \ep_0$,
 then the minimal sets of $\alpha(x)$ and $\alpha(y)$ coincide
 and the minimal sets of $\omega(x)$ and $\omega(y)$ coincide.
\end{lem}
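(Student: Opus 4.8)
The plan is to deduce this lemma directly from \cref{lem:ep0-omega} and \cref{lem:ep0-alpha}, which themselves follow from the remark preceding them. The hypothesis $\sup_{n \bi} d(f^n(x), f^n(y)) \le \ep_0$ is strictly stronger than each of the two limit-superior hypotheses appearing in those lemmas: indeed, if $d(f^n(x), f^n(y)) \le \ep_0$ holds for \emph{all} $n \bi$, then in particular $\limsup_{n \to +\infty} d(f^n(x), f^n(y)) \le \ep_0$ and $\limsup_{n \to -\infty} d(f^n(x), f^n(y)) \le \ep_0$. So the proof is simply: observe that the uniform bound implies both one-sided limsup bounds, then apply \cref{lem:ep0-omega} to conclude that $\omega(x)$ and $\omega(y)$ have the same minimal sets, and apply \cref{lem:ep0-alpha} to conclude that $\alpha(x)$ and $\alpha(y)$ have the same minimal sets.

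There is essentially no obstacle here; the lemma is a bookkeeping consequence of the two preceding ones, stated separately because it is the form that will be invoked later (a two-sided orbit-closeness hypothesis, matching the definition of expansiveness). The only thing worth noting is that the statement combines both the forward ($\omega$) and backward ($\alpha$) conclusions into a single assertion, so the proof must cite both \cref{lem:ep0-omega} and \cref{lem:ep0-alpha}. I would write it as a two-line argument:

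Since $d(f^n(x), f^n(y)) \le \ep_0$ for all $n \bi$, we have both $\limsup_{n \to +\infty} d(f^n(x), f^n(y)) \le \ep_0$ and $\limsup_{n \to -\infty} d(f^n(x), f^n(y)) \le \ep_0$. By \cref{lem:ep0-omega}, the minimal sets of $\omega(x)$ and $\omega(y)$ coincide; by \cref{lem:ep0-alpha}, the minimal sets of $\alpha(x)$ and $\alpha(y)$ coincide. $\qed$

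If one wanted to be fully self-contained rather than citing the two lemmas, one would unwind the remark: the choice of $\ep_0$ forces that any orbit staying within $\ep_0$ of another must accumulate on the same collection of minimal sets, because distinct minimal sets $M_i, M_j$ are $3\ep_0$-separated, so a point in $\omega(x)$ lying in $M_i$ together with closeness of the $f^n$-orbits pins $\omega(y)$ to the same $M_i$ (and symmetrically). But given that \cref{lem:ep0-omega} and \cref{lem:ep0-alpha} are already available, the short citation-based proof is the natural one.
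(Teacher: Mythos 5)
Your proof is correct and matches the paper exactly: the paper derives \cref{lem:ep0} as an immediate consequence of \cref{lem:ep0-omega} and \cref{lem:ep0-alpha}, just as you do, since the uniform bound over all $n \bi$ implies both one-sided $\limsup$ hypotheses.
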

\begin{thm}[Main Result]\label{thm:main}
Let $(X,f)$ be a finite-rank zero-dimensional system
 such that no minimal set is an odometer.
Then, $(X,f)$ is expansive.
\end{thm}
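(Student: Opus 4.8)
The plan is to show that the factor map $\pi_n : X \to X_n$ becomes injective for all sufficiently large $n$; since each $X_n$ is a two-sided subshift over the finite alphabet $\barV_n$, this immediately gives expansivity of $(X,f)$. Equivalently, using the $\ep_0$ chosen just before the statement together with \cref{lem:ep0}, it suffices to prove that there is no pair $x \ne y \in X$ with $\sup_{n \in \Z} d(f^n(x), f^n(y)) \le \ep_0$. So I would argue by contradiction: assume such a pair $(x,y)$ exists. By \cref{lem:ep0} the minimal sets in $\alpha(x)$ and $\alpha(y)$ coincide, and likewise for $\omega(x)$ and $\omega(y)$; combined with \cref{prop:finite-minimal-set} there are only finitely many minimal sets to keep track of. The orbit of $(x,y)$ stays within the $\ep_0$-diagonal for all time, so in the array-system coding the two points are $n$-compatible for ever-larger $n$ unless the depth of compatibility stabilizes at some finite value $N$; the goal is to rule out a stabilized finite depth.

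The heart of the argument follows Downarowicz--Maass and Bezuglyi--Kwiatkowski--Medynets: first telescope the Bratteli diagram so that the vertex set stabilizes to $K$ vertices at every level past some point (possible since $K = \liminf \hash V_n$), and so that between consecutive levels every vertex receives a path from every vertex below (or at least the combinatorics is as uniform as we need). Then I would analyze an $n$-separated pair of minimal depth. The key structural fact to extract is a \emph{reading}/\emph{recognition} property: for $n$ large, knowing the row $x[n+1]$ on a long enough window determines $x[n]$ on a slightly shorter window, up to a controlled ambiguity governed by the maximal length of an $n$-symbol $v_{n+1,i}[n]$. Because the rank is finite, the number of distinct $n$-symbols is bounded by $K$, so these lengths and the associated ``overlap'' data live in a finite space; a pigeonhole/compactness argument over $n$ then forces a periodic structure. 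This is exactly where odometers would otherwise sneak in: a genuinely non-expansive pair, after passing to $\alpha$- and $\omega$-limits and using the finitely many minimal sets, forces one of those minimal sets to be conjugate to an infinite odometer, contradicting the hypothesis. The role of \cref{thm:subsystem} is that each minimal set again has rank $\le K$, so the odometer/expansive dichotomy can be applied to it, or a finite-rank minimal system with no non-trivial odometer factor is handled by the Downarowicz--Maass argument directly.

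Concretely, the steps in order: (1) reduce expansivity of $(X,f)$ to injectivity of some $\pi_n$, and further, via \cref{lem:ep0} and the choice of $\ep_0$, to the non-existence of an $\ep_0$-shadowing pair $(x,y)$; (2) telescope so that $\hash V_n = K$ for all $n \ge n_0$ and the transition structure is uniform; (3) assuming a non-expansive pair $(x,y)$, pass to $\alpha$- and $\omega$-limit sets, use \cref{prop:finite-minimal-set} to pin down the finitely many relevant minimal sets $M_i$, and note $(x,y)$ is $n$-compatible for all $n$ up to some depth; (4) show the depth cannot be finite by a local-reading argument (an $n$-separated pair that is $(n{-}1)$-compatible with a common $(n{-}1)$-cut propagates separation to lower levels or to neighbours, using finiteness of the set of $n$-symbols); (5) conclude that infinite compatibility depth forces, along the coincident minimal sets, a chain of vertices each with a single incoming edge — i.e., an odometer sub-structure in one of the $M_i$, contradicting the hypothesis that no minimal set is an odometer (here \cref{thm:Bratteli-Vershik-for-all} and \cref{thm:subsystem} package the ``eventually single-incoming-edge'' phenomenon). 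The main obstacle, as in the minimal case, will be step (4): making the ``recognizability''/local-reading estimate precise in the non-minimal setting where towers may be short, cuts may accumulate, and periodic orbits are present, so that the bounded number of $n$-symbols can genuinely be leveraged into a finitary contradiction without minimality. I expect this to require a careful case split according to whether the non-expansive pair has a common $n$-cut for all $n$ or not, mirroring the $n$-compatible/$n$-separated and common-cut bookkeeping set up in the preliminaries.
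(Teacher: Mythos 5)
There is a genuine structural gap at the start of your argument. You reduce expansivity to the non-existence of a single pair $x \ne y$ with $\sup_{n\in\Z} d(f^n(x),f^n(y)) \le \ep_0$, and then say the two points are ``$n$-compatible for ever-larger $n$ unless the depth of compatibility stabilizes at some finite value,'' proposing to rule out a stabilized finite depth. But any two distinct points differ in some row of the array system, so a single pair \emph{always} has finite depth; a pair staying within $\ep_0$ for all time is merely $L$-compatible for the fixed $L$ determined by $\ep_0$, and nothing about it ``grows.'' There is no contradiction to extract from one pair of finite depth. The correct object to work with (and what the paper uses) is a \emph{sequence} of pairs $(x_n,x'_n)$ of depth exactly $n$, one for every $n$, obtained from the failure of expansivity for every candidate constant; the case split must then be phrased over this family (for which $n$ and $m$ does there exist a depth-$n$ pair with a common $m$-cut), not over a single pair. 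As written, your steps (3)--(4) do not get off the ground.

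Beyond that framing error, the two mechanisms that actually close the argument are missing from your sketch. For the common-cut case, the paper does not use a ``recognizability/local-reading'' estimate; it performs an explicit surgery on the ordered Bratteli diagram (splitting an $m$-symbol at an interior $(m-1)$-cut and then identifying $m$-symbols with equal rows $0$ through $m-1$) that strictly decreases $\hash V_m$ while preserving $m$-separation of a lower-depth pair, so that after at most $K$ iterations one vertex remains and a contradiction appears. For the no-common-cut case, the engine is the Infection lemma: $K^{K+1}+1$ points that are $i$-compatible and pairwise $j$-separated with no common $j$-cuts force the common row $y_k[i]$ to be eventually periodic; producing these points requires the enveloping-semigroup translation onto a common minimal set $M \subseteq \omega(x_n)\cap\omega(x'_n)$, and, crucially in the non-minimal setting with periodic orbits allowed, a separate argument that $\omega(x_n)\cap\omega(x'_n)$ contains no periodic point for large $n$ (otherwise the cut bookkeeping breaks down). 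Your ``pigeonhole over overlap data'' and ``chain of single-incoming-edge vertices'' do not substitute for either of these; also, your telescoping assumption that every vertex at one level connects to every vertex at the next is a simplicity-type property that fails in general here and is not needed. I would regard the proposal as identifying the right circle of ideas (array systems, depth, common cuts, $\ep_0$ and \cref{lem:ep0}, \cref{prop:finite-minimal-set}) but not containing a proof.
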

\begin{proof}
Let $K \ge 1$ be the topological rank of $(X,f)$.
We note that if $K = 1$, then we obtain an ordered Bratteli diagram
 that has rank $K = 1$.
Then, because $(X,f)$ is not an odometer,
 the Bratteli--Vershik system is a single periodic orbit
 and is expansive.
Thus, the conclusion of the theorem is obvious.
Therefore, we assume that $K > 1$.
As with the proof presented in
 \cite{DOWNAROWICZ_2008FiniteRankBratteliVershikDiagAreExpansive},
 we show our proof by contradiction.
Suppose that the claim fails.
Then, for all $L > 0$, there exists a pair $(x,x')$ with distinct elements
 of $X$ that is $L$-compatible.
Because $x \ne x'$, for some $m > L$, $(x,x')$ is $m$-separated.
Therefore, $(x,x')$ has depth $n$ with $L \le n < m$.
Therefore,
 for infinitely many $n$, there exists a pair $(x_n,x'_n)$ of depth $n$.
By telescoping, we can assume that
 every $n > 0$ has a pair $(x_n,x'_n)$ of depth $n$.
Note that even after another telescoping, this quality still holds.
As with the proof presented in
 \cite{DOWNAROWICZ_2008FiniteRankBratteliVershikDiagAreExpansive},
 we show our proof for separate cases:
\enumb
\renewcommand{\labelenumi}{(\arabic{enumi})}
\renewcommand{\theenumi}{(\arabic{enumi})}
\setcounter{enumi}{0}
\item\label{case1} there exists an $N$ such that
 for all $n > N$ and every $m > n$,
 there exists a pair $(x_n,x'_n)$ of depth $n$ with a common $m$-cut;
\item\label{case2} for infinitely many $n$, and every sufficiently large
 $m > n$, any pair of depth $n$ has no common $m$-cut.
\enumn

\vspace{3mm}

\noindent \textit{Proof for case \cref{case1}.}
In \cite{BEZUGLYI_2009AperioSubstSysBraDiag}, the proof is omitted.
However, we present it here for the readers' convenience.
As with the proof presented in
 \cite{DOWNAROWICZ_2008FiniteRankBratteliVershikDiagAreExpansive},
 we prove that such a case never occurs.
Fix some $m > N + K$, and for each integer $n \in [m - K, m -1]$,
 let $(x_n,x'_n)$ be
 a pair of depth $n$ with a common $m$-cut.
For $n = m -1$, we have an $(m - 1)$-compatible $m$-separated pair
 $(x_{m-1},x'_{m-1})$ with a common $m$-cut.
%
%
%
Suppose that all the $m$-cuts of $x_{m-1}$ and $x'_{m-1}$ are the same.
Because the pair $(x_{m-1},x'_{m-1})$ is $m$-separated,
 at least two distinct vertices are in the same place of
 $x_{m-1}[m]$ and $x'_{m-1}[m]$.
These symbols have the same rows from $0$ to $m-1$.
Therefore,
 at least two distinct $m$-symbols have the same rows from $0$ to $m-1$.
If the sets of $m$-symbols have the same rows from $0$ to $m-1$,
 we factor these to the same alphabet,
 i.e., we make a new ordered Bratteli diagram
 identifying such vertices of $V_m$.
Suppose that $v$ and $v'$ are identified to a single $v$.
Then, we assume that the sets $r^{-1}(v)$ and $r^{-1}(v')$
 are identified to a single $r^{-1}(v)$ with the same order.
In addition, we assume that $s^{-1}(v)$ and $s^{-1}(v')$ are joined to form the new
 $s^{-1}(v)$, and the orders are not changed.
Further, $E_{0,\infty}$ has a canonical isomorphism to the original,
 and the $X_{m-1}$ of the new Bratteli diagram is the same as the old one.
Because the pair $(x_{m-2},x'_{m-2})$ was $(m-1)$-separated, and
 this factorization does not affect the $(m-1)$th row,
 the pair $(x_{m-2},x'_{m-2})$ is still $m$-separated.
Note that the number of vertices of the $m$th row has been decreased by at least $1$.
Next, we consider the case in which after a common $m$-cut of
 the pair $(x_{m-1},x'_{m-1})$,
 the coincidence of the positions of the $m$-cuts
 does not continue toward the right or left end.
\begin{figure}
\begin{center}\leavevmode 
\xy
(33,28)*{}; (84,28)*{} **@{-},
(15,26)*{x_{m-1} \hspace{4mm} \text{ row } m-1},
(60,26)*{},
(35,28)*{}; (35,24)*{} **@{-},
(42,28)*{}; (42,24)*{} **@{-},
(48,28)*{}; (48,24)*{} **@{-},
(56,28)*{}; (56,24)*{} **@{-},
(60,28)*{}; (60,24)*{} **@{-},
(68,28)*{}; (68,24)*{} **@{-},
(80,28)*{}; (80,24)*{} **@{-},
%
(33,24)*{}; (84,24)*{} **@{-},
(20,22)*{\text{row \hspace{3mm}} m },
(60,22)*{
 \hspace{16mm} v \hspace{30mm}
 \hspace{2mm} },
(35,24)*{}; (35,20)*{} **@{-},
(68,24)*{}; (68,20)*{} **@{-},
%
(33,20)*{}; (84,20)*{} **@{-},
(33,17)*{}; (84,17)*{} **@{-},
(15,15)*{x'_{m-1} \hspace{4mm} \text{ row } m-1},
(60,13)*{},
(35,17)*{}; (35,13)*{} **@{-},
(42,17)*{}; (42,13)*{} **@{-},
(48,17)*{}; (48,13)*{} **@{-},
(56,17)*{}; (56,13)*{} **@{-},
(60,17)*{}; (60,13)*{} **@{-},
(68,17)*{}; (68,13)*{} **@{-},
(80,17)*{}; (80,13)*{} **@{-},
%
(33,13)*{}; (84,13)*{} **@{-},
(20,11)*{\text{row \hspace{3mm}} m },
(60,11)*{
 \hspace{10mm} v' \hspace{36mm}
 \hspace{2mm} },
(35,13)*{}; (35,9)*{} **@{-},
(56,13)*{}; (56,9)*{} **@{-},
%
(33,9)*{}; (84,9)*{} **@{-},
(33,7)*{}; (84,7)*{} **@{-},
(10,5)*{\text{modified } x_{m-1} \hspace{4mm}},
(60,3)*{},
(35,7)*{}; (35,3)*{} **@{-},
(42,7)*{}; (42,3)*{} **@{-},
(48,7)*{}; (48,3)*{} **@{-},
(56,7)*{}; (56,3)*{} **@{-},
(60,7)*{}; (60,3)*{} **@{-},
(68,7)*{}; (68,3)*{} **@{-},
(80,7)*{}; (80,3)*{} **@{-},
%
(33,3)*{}; (84,3)*{} **@{-},
%
(60,1)*{
 \hspace{10mm} v' \hspace{36mm}
 \hspace{2mm} },
(77,1)*{
 \hspace{10mm} v'' \hspace{36mm}
 \hspace{2mm} },
(35,3)*{}; (35,-1)*{} **@{-},
(56,3)*{}; (56,-1)*{} **@{-},
(68,3)*{}; (68,-1)*{} **@{-},
%
(33,-1)*{}; (84,-1)*{} **@{-},
\endxy
\end{center}
\caption{Change of an $m$-symbol. There is a common cut at the left end.}
\label{change-symbol}
\end{figure}
Suppose that after a common $m$-cut at position $k_0$,
 the common $m$-cuts do not continue to the right end.
Let $k_1 > k_0$ be the position of the first common $m$-cut
 such that the right $m$-symbols $v$ and $v'$ of $x_{m-1}$ and $x'_{m-1}$
 have different lengths.
We assume without loss of generality that $l(v') < l(v)$.
We recall that for any vertex $v \in V \setminus V_0$,
 $l(v) := \hash E_{0,n}(v)$.
Let $k_2 = k_1 + l(v')$.
Then, $k_2$ is the position of the next $m$-cut of $x'_{m-1}$.
Because $x_{m-1}[m-1] = x'_{m-1}[m-1]$,
 the $m$-symbol $v$ itself has an $(m-1)$-cut at $l(v')$ from the
 left.
Then, we separate $v$ into two parts at the position, making a new vertex
 $v'' \in V_m$ such that $v''[0,m-1]$ is the right half of $v[0,m-1]$.
The left half is identified with $v'$ (see \cref{change-symbol}).
We note that $v''$ is a ``new'' $m$-symbol, even if there has been
 an original symbol $v'''$ with $v'''[0,m-1] = v''[0,m-1]$.
We replace every occurrence of the $m$-symbol $v$ in every element of $X_m$
 by the concatenation $v'v''$.
In the Bratteli diagram, we delete $v$ and add $v''$,
 and we replace the edges that connect $v$ with the edges that connect $v'$ or $v''$.
In accordance with the new $(m+1)$-symbols,
 each edge in $E_{m+1}$ that had connected $v$ has to be duplicated into
 two edges, one connecting $v'$ and the other connecting $v''$.
Further, the orders of the edges of $E_{m+1}$ have to be changed.
After all the necessary changes, we show that
 the new ordered Bratteli diagram has the Bratteli--Vershik system
 that is canonically isomorphic to the original.
We note that the maximal or minimal paths are joined to the infinite cuts
 from level $0$ to $\infty$.
Because the $(m+1)$th row is not changed,
 no new infinite cut arises.
Therefore, no new infinite maximal path or infinite minimal path arises.
Thus, we need not change the Vershik map up to canonical isomorphism;
 the existence of canonical isomorphism is evident.
The number of vertices is not changed.
After this modification, no cut that existed is removed.
The coincidence of the $m$-cut from $k_0$ to the right might be shortened.
Nevertheless, the same modification is possible, and
 finally, we reach the point where we have the same $m$-cuts
 from $k_0$ to the right end.
The same argument is valid for the left direction from $k_0$.
Now,
 $x_{m-1}$ and $x'_{m-1}$ have the same $m$-cuts throughout the sequences.
Then, we can apply the previous argument.
Thus,
 we get a factor in the $m$th row that decreases the number of symbols
 $V_m$, and $(x_{m-2},x'_{m-2})$ is still $m$-separated.
We can now delete the $(m-1)$th row and continue this process.
Finally, the $m$th row is represented by only one vertex,
 and the pair $(x_{m-K},x'_{m-K})$
 is still $m$-separated and has a common $m$-cut,
 which is a contradiction.

\vspace{5mm}

%
%
%
\noindent \textit{Proof for case \cref{case2}}.
For an arbitrarily large $n$, there exists an $m(n)>n$ such that
 every pair $(x_n,x'_n)$ of depth $n$
 has no common $m(n)$-cut.
As described briefly in
 \cite{DOWNAROWICZ_2008FiniteRankBratteliVershikDiagAreExpansive},
 by telescoping,
 we wish to show that the condition of \cref{case2} holds for every $n$.
Fix $n$.
Take an $n' > m(n)$ such that
 every pair $(x_{n'},x'_{n'})$ of depth $n'$
 has no common $m(n')$-cut.
Then, because $n' > m(n)$, every pair $(x_n,x'_n)$ of depth $n$
 has no common $n'$-cut.
Thus, by telescoping (from $n'$ to $n$),
 every pair $(x_n,x'_n)$ of depth $n$ has
 no common $(n+1)$-cut.
Thus, through consecutive application of such telescoping,
 we get an ordered Bratteli diagram such that
 for every $n > 0$, every pair $(x_n,x'_n)$ of depth $n$
 has no common $(n+1)$-cut.
For each $n > 0$,
 let $(x_n,x'_n)$ be a pair of depth $n$ that has no common $(n+1)$-cut.
There exists an $N > 0$ such that every pair $(x_n,x'_n)$ of depth $n \ge N$
 satisfies $\sup_{i \bi}d(f^i(x_n),f^i(x'_n)) \le \ep_0$.
By \cref{lem:ep0}, for any pair $(x_n,x'_n)$ of depth $n \ge N$,
 the minimal sets of $\omega(x_n)$ and $\omega(x'_n)$ coincide
 and the minimal sets of $\alpha(x_n)$ and $\alpha(x'_n)$ coincide.
First, we claim that $\omega(x_n) \cap \omega(x'_n)$ with $n \ge N$
 contains a periodic point only for finitely many $n \ge N$.
Suppose that there exists a periodic point $y$ and an infinite set
 $\bN \subset \seb n \mid n \ge N \sen$ such that
 $y \in \omega(x_n) \cap \omega(x'_n)$ for all $n \in \bN$.
Let $Z_n$ be the positions of the $n$-cuts of $y$.
Then, $Z_i$ ($i \in \bN$) is periodic and $Z_n \supseteq Z_m$ for $n < m$.
Therefore, there exists an infinite set $\bN' \subset \bN$ such that
 for all $n \in \bN'$, the $n$-cuts of $y$ are the same.
Let $n \in \bN'$.
There exists a sequence $k(1) < k(2) < \dotsb$ with
 $\lim_{i \to +\infty}f^{k(i)}(x_n) = y$.
Taking a subsequence if necessary,
 we get $\lim_{i \to +\infty}f^{k(i)}(x'_n) = y' \in E_{0,\infty}$.
Because $x_n[n] = x'_n[n]$, we get $y[n] = y'[n]$.
Thus, $y$ and $y'$ have the same $n$-cuts.
Similarly, $y$ and $y'$ have no common $(n+1)$-cut.
Nevertheless, the positions of the $(n+1)$-cuts of $y$ are the same as the positions
 of the $n$-cuts of $y$,
 and the $(n+1)$-cuts of $y'$ have to be a part of the $n$-cuts of $y'$ and of $y$.
Thus, $y,y'$ have common $(n+1)$-cuts, which is a contradiction.
We have proved the claim.
Therefore, there exists an $N' > N$ such that
 for all $n \ge N'$, $\omega(x_n) \cap \omega(x'_n)$ contains no periodic
 point.
By telescoping, there exists a minimal set $M$ such that
 for all $n \bpi$, $M \subseteq \omega(x_n) \cap \omega(x'_n)$.
The remainder of the argument proceeds as that of the Infection lemma of
 \cite{DOWNAROWICZ_2008FiniteRankBratteliVershikDiagAreExpansive}.
Nevertheless, we cannot assume $M = E_{0,\infty}$ in general.
We only need to check that this fact does not cause any problem.
Let $i_0 > 0$ and $L > 0$ be arbitrarily large integers.
Let $j = i_0+L$.
For each $i \in [i_0,j-1]$, the pair $(x_i,x'_i)$ has depth $i$ with
 no common $(i+1)$-cuts.
Fix a $y_0 \in M$.
As in \cite{DOWNAROWICZ_2008FiniteRankBratteliVershikDiagAreExpansive},
 for each $i \in [i_0,j-1]$,
 by applying some element $\tau_i$ of the enveloping semi-group,
 we can get a pair $(\tau_i(x_i),\tau_i(x'_i))$ with $\tau_i(x_i) = y_0$.
By letting $y_i = \tau_i(x'_i)$ ($i \in [i_0,j-1]$), we get that
 each $(y_0,y_i)$ is $i$-compatible with no common $(i+1)$-cuts.
Let $i,i' \in [i_0,j-1]$ satisfy $i < i'$.
Then, $y_{i'}[i+1] = y_0[i+1]$, and $y_{i'}$ has no common $(i+1)$-cut with
 $y_i$.
Thus, we get finite elements $y_0$ and $y_i$ ($i \in [i_0,j-1]$).
These elements are all $i_0$-compatible and pairwise $j$-separated.
 Furthermore, they have no common $j$-cuts pairwise.
We have to take care that $y_i$ ($i \in [i_0,j-1]$) need not be elements of
 $M$.
Let $M(i) := \pi_i(M)$ for all $i \bni$.
The next lemma is called the Infection lemma
 in \cite{DOWNAROWICZ_2008FiniteRankBratteliVershikDiagAreExpansive}.

\vspace{2mm}

\noindent \textbf{Infection lemma}:
Suppose that there exist at least $K^{K+1} + 1$ $i$-compatible points
 $y_k \in E_{0,\infty}$
 ($k \in [1, K^{K+1} + 1]$), which, for some $j > i$, are pairwise
 $j$-separated with no common $j$-cuts.
Let $\hy = y_k[i] \in M(i)$ be the common sequence.
 Then, $\hy$ is eventually periodic.

\vspace{2mm}

The proof is factually identical to that of the original.
Thus, we omit the proof here.
By the Infection lemma, $M(i_0)$ has a periodic orbit.
Nevertheless, by the minimality of $M$, $M(i_0)$ is also minimal.
Thus, $M(i_0)$ is a periodic orbit.
Because $i_0$ can be arbitrarily large, it follows that
 $M$ is an odometer, which is a contradiction.
\end{proof}
We think that we have settled down the extension process of
 \cite[Theorem 1]{DOWNAROWICZ_2008FiniteRankBratteliVershikDiagAreExpansive}
 at least once.
As an example, we get the following:
\begin{cor}\label{cor:proximal-case}
If $(X,f)$ is a finite-rank proximal zero-dimensional system,
 then it is symbolic.
\end{cor}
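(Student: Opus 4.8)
The plan is to reduce the corollary to the Main Theorem (\cref{thm:main}) by observing that a proximal system cannot contain an infinite odometer as a minimal set. First I would recall the characterization of proximal systems cited in the introduction: by Akin and Kolyada \cite[Proposition 2.2]{Akin_2003LiYorkeSens}, a system $(X,f)$ is proximal if and only if it is essentially minimal and its unique minimal set is a single fixed point. Thus the only minimal set of $(X,f)$ is $\{x_0\}$ for some fixed point $x_0$.

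Next I would invoke \cref{thm:main}: since $(X,f)$ has finite topological rank and its unique minimal set is a one-point orbit (which is certainly not an infinite odometer, as odometer systems are always infinite in the sense of this paper), all hypotheses of the Main Theorem are satisfied. Hence $(X,f)$ is expansive. It then remains to pass from expansiveness to symbolicity. For a zero-dimensional (hence compact metrizable, totally disconnected) system, expansiveness is equivalent to being topologically conjugate to a two-sided subshift: given an expansive constant $c > 0$, choose a finite clopen partition $\Pcal$ of $X$ with $\mesh(\Pcal) < c$, and the itinerary map $x \mapsto (\Pcal\text{-address of } f^n(x))_{n \in \Z}$ is then a continuous injection intertwining $f$ with the shift, whose image is a closed shift-invariant subset of the full shift on the alphabet $\Pcal$. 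This gives the desired conjugacy with a subshift, i.e., $(X,f)$ is symbolic.

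The only mild subtlety — and the one step worth spelling out rather than the routine conjugacy construction — is checking that a one-point minimal orbit does not fall under the excluded "odometer" case of \cref{thm:main}; but this is immediate since odometers are infinite, so there is in fact no obstacle and the argument is short. I would therefore present the proof in essentially the three lines above, citing \cite{Akin_2003LiYorkeSens}, \cref{thm:main}, and the standard fact that an expansive zero-dimensional system is conjugate to a subshift.
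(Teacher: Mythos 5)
Your proposal is correct and matches the argument the paper intends (the corollary is stated as an immediate consequence of \cref{thm:main}): proximality forces the unique minimal set to be a fixed point by Akin--Kolyada, which is not an infinite odometer, so the Main Theorem gives expansiveness, and the standard clopen-partition itinerary argument converts expansiveness of a zero-dimensional system into conjugacy with a subshift. No gaps.
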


\noindent
\textsc{Acknowledgments:}
The author would like to thank anonymous referee(s) who reviewed our previous submission.
Their comments encouraged us to write this paper.
This work was partially supported by JSPS KAKENHI (Grant Number 16K05185).

%
%
%

\end{document}